\newtheorem{theorem}{Theorem}
\newtheorem{lemma}[theorem]{Lemma}
\newtheorem{corollary}[theorem]{Corollary}
\theoremstyle{definition}
\newtheorem{example}[theorem]{Example}
\newtheorem{definition}[theorem]{Definition}
\theoremstyle{remark}
\theoremstyle{remark}
\newtheorem{remark}[theorem]{Remark}
\def\({{\rm (}}
\def\){{\rm )}}
\let\Mathrm\operator@font
\let\Bbb\mathbb
\newcommand{\fm}{\ensuremath{\mathfrak m}}
\def\standop#1{\mathop{\Mathrm #1}\nolimits}
\def\difstop#1#2{\expandafter\def\csname #1\endcsname{\standop{#2}}}
\def\defstop#1{\difstop{#1}{#1}}
\def\red{_{\Mathrm{red}}}
\def\fm{\mathfrak{m}}
\def\sdarrow#1{\downarrow\hbox to 0pt{\scriptsize$#1$\hss}}
\def\suarrow#1{\uparrow\hbox to 0pt{\scriptsize$#1$\hss}}
\def\ssearrow#1{\searrow\hbox to 0pt{\scriptsize$#1$\hss}}
\def\section{\@startsection{section}{1}{\z@ }%
{-3.5ex plus -1ex minus -.2ex}{2.3ex plus .2ex}{\bf }}
\long\def\refname{\par\kern -3ex
\begin{center}\rm R\sc{eferences}\end{center}\par\kern 
-2ex}
\def\@seccntformat#1{\csname the#1\endcsname.\quad}
\def\@@@sect#1#2#3#4#5#6[#7]#8{%
   \ifnum #2>\c@secnumdepth 
      \def \@svsec {}\else \refstepcounter {#1}%
      \def\@svsec{}
   \fi 
   \@tempskipa #5\relax 
   \ifdim \@tempskipa >\z@ 
     \begingroup #6\relax \@hangfrom {\hskip #3\relax 
     \@svsec}{\interlinepenalty \@M #8\par }\endgroup 
     \csname #1mark\endcsname {#7}
   \else 
   \def \@svsechd {#6\hskip #3\@svsec #8\csname #1mark\endcsname {#7}}
   \fi \@xsect {#5}}
\def\@@@startsection#1#2#3#4#5#6{%
 \if@noskipsec \leavevmode \fi \par \@tempskipa #4\relax \@afterindenttrue 
 \ifdim \@tempskipa <\z@ \@tempskipa -\@tempskipa \@afterindentfalse 
 \fi \if@nobreak \everypar {}\else \addpenalty {\@secpenalty }\addvspace 
  {\@tempskipa }\fi \@ifstar {\@ssect {#3}{#4}{#5}{#6}}{\@dblarg 
  {\@@@sect {#1}{#2}{#3}{#4}{#5}{#6}}}}
\def\theparagraph{\thesection.\arabic{paragraph}}
\def\aparagraph{\@@@startsection{paragraph}{2}{\z@ }%
              {1.75ex plus .2ex minus .15ex}{-1em}{\bf(\theparagraph) } }
\def\paragraph{\@@@startsection{paragraph}{2}{\z@ }%
              {1.75ex plus .2ex minus .15ex}{-1em}{}{\bf(\theparagraph)} }
\let\c@theorem\c@paragraph
\title{$F$-finiteness of homomorphisms and its descent}
\author{M{\sc itsuyasu} H{\sc ashimoto}}
\date{\normalsize
Graduate School of Mathematics, Nagoya University\\
Chikusa-ku,  Nagoya 464--8602 JAPAN\\
{\small \tt hasimoto@math.nagoya-u.ac.jp}}
\begin{document}

\maketitle
\footnote[0]
    {2010 \textit{Mathematics Subject Classification}. 
    Primary 13A35, 13F40; Secondary 13E15.
    Key Words and Phrases.
    $F$-finiteness, reduced homomorphism, Nagata ring.
}

\begin{abstract}
Let $p$ be a prime number.
We define the notion of $F$-finiteness of homomorphisms of 
$\Bbb F_p$-algebras, and
discuss some basic properties.
In particular, we prove a sort of descent theorem on $F$-finiteness
of homomorphisms of $\Bbb F_p$-algebras.
As a corollary, we prove the following.
Let $g:B\rightarrow C$ be a homomorphism of Noetherian $\Bbb F_p$-algebras.
If $g$ is faithfully flat reduced and $C$ is $F$-finite, 
then $B$ is $F$-finite.
This is a generalization of Seydi's result on excellent local rings of
characteristic $p$.
\end{abstract}

\section{Introduction}
Throughout this paper, $p$ denotes a prime number, and $\Bbb F_p$ denotes
the finite field with $p$ elements.
In commutative algebra of characteristic $p$, $F$-finiteness of rings
are commonly used for a general assumption which guarantees the \lq\lq 
tameness'' 
of the theory, as well as excellence.
Although $F$-finiteness for a Noetherian $\Bbb F_p$-algebra is stronger
than excellence \cite{Kunz},
$F$-finiteness is not so restrictive for practical use.
A perfect field is $F$-finite.
An algebra essentially of finite type over an $F$-finite ring is $F$-finite.
An ideal-adic completion of a Noetherian $F$-finite ring is again $F$-finite.
See Example~\ref{example.ex} and Example~\ref{complete2.ex}.
In this paper, replacing the absolute Frobenius map by the relative one,
we define the $F$-finiteness of homomorphism between rings of characteristic
$p$.
We say that an $\Bbb F_p$-algebra map $A\rightarrow B$ is $F$-finite
(or $B$ is $F$-finite over $A$) if the relative Frobenius map 
(Radu--Andr\'e homomorphism)
$\Phi_1(A,B):B^{(1)}\otimes_{A^{(1)}}A\rightarrow B$ is finite
(Definition~\ref{main.def}, see section~2 for the notation).
Thus a ring $B$ of characteristic $p$ is $F$-finite if and only
if it is $F$-finite over $\Bbb F_p$.
Replacing absolute Frobenius by relative Frobenius,
we get definitions and results on homomorphisms instead of rings.
This is a common idea in
\cite{Radu}, \cite{Andre}, 
\cite{Andre2}, \cite{Dumitrescu2}, \cite{Dumitrescu}, 
\cite{Enescu}, \cite{Hashimoto}, and \cite{Hashimoto2}.

In section~2,
we discuss basic properties of $F$-finiteness of homomorphisms and rings.
Some of well-known properties
of $F$-finiteness of rings are naturally generalized to those for
$F$-finiteness of homomorphisms.
$F$-finiteness of homomorphisms has connections with that for rings.
For example, if $A\rightarrow B$ is $F$-finite and $A$ is $F$-finite,
then $B$ is $F$-finite (Lemma~\ref{basic.thm}).

In section~3, we prove the main theorem (Theorem~\ref{main.thm}).
This is a sort of descent of $F$-finiteness.
As a corollary, we prove that for a faithfully flat reduced homomorphism
of Noetherian rings $g:B\rightarrow C$, if $C$ is $F$-finite, then $B$
is $F$-finite.
Considering the case that $f$ is a completion of a Noetherian
local ring, we recover
Seydi's result on excellent local rings of characteristic $p$ \cite{Seydi}.

\medskip
Acknowledgement: The author is grateful to 
Professor K.~Fujiwara,
Professor J.-i.~Nishimura,
Professor C.~Rotthaus,
Dr. A. Sannai,
Professor S.~Takagi,
and
Professor H.~Tanimoto
for valuable advice.

\section{$F$-finiteness of homomorphisms}
Let $k$ be a perfect field of characteristic $p$, 
and $r\in\Bbb Z$.
For a $k$-space $V$, the additive group $V$ with the new $k$-space structure
$\alpha\cdot v=\alpha^{p^{-r}}v$ is denoted by $V^{(r)}$.
An element $v$ of $V$, viewed as an element of $V^{(r)}$ is (sometimes) denoted
by $v^{(r)}$.
If $A$ is a $k$-algebra, then $A^{(r)}$ is a $k$-algebra with the product
$a^{(r)}\cdot b^{(r)}=(ab)^{(r)}$.
We denote the Frobenius map $A\rightarrow A$ $(a\mapsto a^p)$ by $F$ or $F_A$.
Note that $F^e:A^{(r+e)}\rightarrow A^{(r)}$ is a $k$-algebra map.
Throughout the article, we regard $A^{(r)}$ as an $A^{(r+e)}$-algebra 
through $F^e$ ($A$ is viewed as $A^{(0)}$).
For an $A$-module $M$, the action $a^{(r)}\cdot m^{(r)}=(am)^{(r)}$ makes
$M^{(r)}$ an $A^{(r)}$ module.
If $I$ is an ideal of $A$, then $I^{(r)}$ is an ideal of $A^{(r)}$.
If $e\geq 0$, then $I^{(e)}A=I^{[p^e]}$, where $I^{[p^e]}$ is the ideal
of $A$ generated by $\{a^{p^e}\mid a\in I\}$.
In commutative algebra, $A^{(r)}$ is also denoted by ${}^{-r}A$.
We employ the notation more consistent with that in representation theory 
--- the $e$th Frobenius twist of $V$ 
is denoted by $V^{(e)}$, see \cite{Jantzen}.
We use this notation for $k=\Bbb F_p$.

Let $A\rightarrow B$ be an $\Bbb F_p$-algebra map, and $e\geq 0$.
Then the relative Frobenius map (or Radu--Andr\'e homomorphism) 
$\Phi_e(A,B):B^{(e)}\otimes_{A^{(e)}}A\rightarrow B$
is defined by $\Phi_e(A,B)(b^{(e)}\otimes a)=b^{p^e}a$.

\begin{definition}\label{main.def}
An $\Bbb F_p$-algebra map $A\rightarrow B$ is said to be {\em $F$-finite}
if $\Phi_1(A,B): B^{(1)}\otimes_{A^{(1)}}A\rightarrow B$ is finite.
That is, $B$ is a finitely generated $B^{(1)}\otimes_{A^{(1)}}A$-module
through $\Phi_1(A,B)$.
We also say that $B$ is $F$-finite over $A$.
\end{definition}

\begin{lemma}\label{basic.thm}
Let $f:A\rightarrow B$, $g:B\rightarrow C$, and $h:A\rightarrow \tilde A$
be $\Bbb F_p$-algebra maps, and $\tilde B:=\tilde A\otimes_A B$.
\begin{enumerate}
\item[\bf 1]
The following are equivalent.
\begin{enumerate}
\item[\bf a] $f$ is $F$-finite.
That is, $\Phi_1(A,B)$ is finite.
\item[\bf b] For any $e>0$, $\Phi_e(A,B)$ is finite.
\item[\bf c] For some $e>0$, $\Phi_e(A,B)$ is finite.
\end{enumerate}
\item[\bf 2] If $f$ and $g$ are $F$-finite, then so is $gf$.
\item[\bf 3] If $gf$ is $F$-finite, then so is $g$.
\item[\bf 4] The ring $A$ is $F$-finite \(that is, the Frobenius map
$F_A:A^{(1)}\rightarrow A$ is finite\) 
if and only if the unique homomorphism $\Bbb F_p\rightarrow A$ 
is $F$-finite.
\item[\bf 5] If $f:A\rightarrow B$ is $F$-finite, then the base change
$\tilde f:\tilde A\rightarrow\tilde B$ is $F$-finite.
\item[\bf 6] If $B$ is $F$-finite, then $f$ is $F$-finite.
\item[\bf 7] If $A$ and $f$ are $F$-finite, then $B$ is $F$-finite.
\end{enumerate}
\end{lemma}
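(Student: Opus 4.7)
My plan centers on a multiplicativity identity for the relative Frobenius: for any $e,f\geq 0$, inserting $A^{(e)}$ via $A^{(e+f)}\to A^{(e)}\to A$ gives the factorization
\[
\Phi_{e+f}(A,B)=\Phi_e(A,B)\circ\bigl(\Phi_f(A^{(e)},B^{(e)})\otimes_{A^{(e)}}\id_A\bigr),
\]
where the second factor is the base change of $\Phi_f(A^{(e)},B^{(e)})$ along $F^e\colon A^{(e)}\to A$. As a map of abelian groups, $\Phi_f(A^{(e)},B^{(e)})$ coincides with $\Phi_f(A,B)$, so the two are simultaneously finite; and base change preserves module-finiteness. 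I verify the identity by a direct element chase, tracking $b^{(e+f)}\otimes a$ through the decomposition $B^{(e+f)}\otimes_{A^{(e+f)}}A=(B^{(e+f)}\otimes_{A^{(e+f)}}A^{(e)})\otimes_{A^{(e)}}A$, which exhibits both sides as computing $b^{p^{e+f}}a$.

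With this identity in hand, Part 1 is immediate. For (a)$\Rightarrow$(b), I induct on $e$ using $\Phi_{e+1}=\Phi_1\circ(\Phi_e(A^{(1)},B^{(1)})\otimes\id_A)$, whose two factors are both finite. Implication (b)$\Rightarrow$(c) is trivial. For (c)$\Rightarrow$(a), the same identity (with arguments $1$ and $e-1$) writes $\Phi_e=\Phi_1\circ\sigma$ for a ring map $\sigma$; so finiteness of $B$ over $B^{(e)}\otimes_{A^{(e)}}A$ forces finiteness over the intermediate ring $B^{(1)}\otimes_{A^{(1)}}A$, through which the action factors. Parts 2 and 3 are analogous composition arguments using the factorization
\[
\Phi_1(A,C)=\Phi_1(B,C)\circ\bigl(\id_{C^{(1)}}\otimes_{B^{(1)}}\Phi_1(A,B)\bigr),
\]
whose second factor is the base change of $\Phi_1(A,B)$ along the ring map $B^{(1)}\to C^{(1)}$. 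In part 2, both factors are finite, so $\Phi_1(A,C)$ is; in part 3, I read the same factorization in reverse, noting that $C$, being finite over the source of $\Phi_1(A,C)$, is a fortiori finite over the intermediate ring $C^{(1)}\otimes_{B^{(1)}}B$ through which the action factors.

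Parts 4--7 are essentially formal. For part 4, perfectness of $\Bbb F_p$ gives $\Bbb F_p^{(1)}=\Bbb F_p$, and hence $\Phi_1(\Bbb F_p,A)$ is literally $F_A$. For part 5, the canonical isomorphism $\tilde B^{(1)}\otimes_{\tilde A^{(1)}}\tilde A\cong(B^{(1)}\otimes_{A^{(1)}}A)\otimes_A\tilde A$ identifies $\Phi_1(\tilde A,\tilde B)$ with the base change of $\Phi_1(A,B)$ along $A\to\tilde A$, and module-finiteness survives base change. Parts 6 and 7 follow by applying parts 3 and 2 respectively to the factorization $\Bbb F_p\to A\to B$, combined with part 4. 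The main technical obstacle is the careful bookkeeping in the multiplicativity identity of part 1 --- the interplay between Frobenius twists on $A$ and $B$ and the rings over which the tensor products are formed demands attention --- but once that identity is established, every remaining item is a routine diagram manipulation.
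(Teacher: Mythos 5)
Your proof is correct and follows essentially the same route as the paper: the paper simply cites \cite[Lemma~4.1]{Hashimoto}, whose content is precisely the transitivity identity $\Phi_{e+f}(A,B)=\Phi_e(A,B)\circ(\Phi_f(A^{(e)},B^{(e)})\otimes_{A^{(e)}}\id_A)$, the composition formula $\Phi_1(A,C)=\Phi_1(B,C)\circ(\id_{C^{(1)}}\otimes_{B^{(1)}}\Phi_1(A,B))$, base change, and the identification $\Phi_1(\Bbb F_p,A)=F_A$ that you work out by hand, and parts {\bf 6}--{\bf 7} are deduced from {\bf 2}--{\bf 4} exactly as in the paper. The only difference is that you supply the element-level verifications rather than citing them, and all of those verifications check out.
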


\begin{proof}
{\bf 1} This is immediate, using \cite[Lemma~4.1, {\bf 2}]{Hashimoto}.
{\bf 2} and {\bf 3} follow from \cite[Lemma~4.1, {\bf 1}]{Hashimoto}.
{\bf 4} follows from \cite[Lemma~4.1, {\bf 5}]{Hashimoto}.
{\bf 5} follows from \cite[Lemma~4.1, {\bf 4}]{Hashimoto}.
{\bf 6} follows from {\bf 3} and {\bf 4}.
{\bf 7} follows from {\bf 2} and {\bf 4}.
\end{proof}

\begin{example}\label{example.ex}
Let $e\geq 1$, and
$f:A\rightarrow B$ be an $\Bbb F_p$-algebra map.
\begin{enumerate}
\item[\bf 1] If $B=A[x]$ is a polynomial ring, then it is 
$F$-finite over $A$.
\item[\bf 2] If $B=A_S$ is a localization of $A$ by a multiplicatively
closed subset $S$ of $A$, then $\Phi_e(A,B)$ is an isomorphism.
In particular, $B$ is $F$-finite over $A$.
\item[\bf 3] If $B=A/I$ with $I$ an ideal of $A$, then 
\[
B^{(e)}\otimes_{A^{(e)}}A
\cong
(A^{(e)}/I^{(e)})\otimes_{A^{(e)}}A
\cong
A/I^{(e)}A=A/I^{[p^e]}.
\]
Under this identification, $\Phi_e(A,B)$ is identified with the
projection $A/I^{[p^e]}\rightarrow A/I$.
In particular, $B$ is $F$-finite over $A$.
\item[\bf 4] If $B$ is essentially of finite type over $A$, then
$B$ is $F$-finite over $A$.
\end{enumerate}
\end{example}

\begin{proof}
{\bf 1} The image of $\Phi_1(A,B)$ is $A[x^p]$, and hence $B$ is
generated by $1,x,\ldots,x^{p-1}$ over it.
{\bf 2} Note that $B^{(e)}$ is identified with $(A^{(e)})_{S^{(e)}}$,
where $S^{(e)}=\{s^{(e)}\mid s\in S\}$.
So $B^{(e)}\otimes_{A^{(e)}}A$ is identified with
$(A^{(e)})_{S^{(e)}}\otimes_{A^{(e)}}A\cong A_{S^{(e)}}$,
and $\Phi_e(A,B)$ is identified with the isomorphism $A_{S^{(e)}}\cong
A_S$.
{\bf 3} is obvious.
{\bf 4} This is a consequence of {\bf 1}, {\bf 2}, {\bf 3}, and
Lemma~\ref{basic.thm}, {\bf 2}.
\end{proof}

\begin{lemma}
Let $A\xrightarrow f B\xrightarrow g C$ be a sequence of $\Bbb F_p$-algebra
maps.
Then for $e>0$, the diagram
\[
\xymatrix{
B^{(e)}\otimes_{A^{(e)}}A \ar[r]^-{\Phi_e(A,B)} \ar[d]^{g^{(e)}\otimes 1}
&
B \ar[d]^g \\
C^{(e)}\otimes_{A^{(e)}}A \ar[r]^-{\Phi_e(A,C)} &
C
}
\]
is commutative.
\end{lemma}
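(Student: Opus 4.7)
The plan is a routine element chase on simple tensors, since both $\Phi_e(A,B)\circ(g^{(e)}\otimes 1)$ and $g\circ\Phi_e(A,B)$ are additive maps out of the tensor product $B^{(e)}\otimes_{A^{(e)}}A$; hence commutativity reduces to checking equality on a generating set of pure tensors $b^{(e)}\otimes a$ with $b\in B$ and $a\in A$.

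First I would recall the relevant conventions: the $A$-algebra structure on $C$ is the composite $gf$, so that an element $a\in A$ acts on $C$ as $g(f(a))$, and the map $g^{(e)}\colon B^{(e)}\to C^{(e)}$ is the same underlying set-theoretic map as $g$, just with the Frobenius-twisted $\Bbb F_p$-algebra structure. From the definition of $\Phi_e$ it then follows that $\Phi_e(A,C)(c^{(e)}\otimes a)=c^{p^e}\cdot g(f(a))$ in $C$.

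Next I would compute both sides. Following the top-right path, $b^{(e)}\otimes a$ goes to $b^{p^e}\cdot f(a)\in B$ via $\Phi_e(A,B)$, and then to $g(b^{p^e}\cdot f(a))=g(b)^{p^e}\,g(f(a))$ via $g$. Following the left-bottom path, $b^{(e)}\otimes a$ goes to $g(b)^{(e)}\otimes a$ via $g^{(e)}\otimes 1$, and then to $g(b)^{p^e}\cdot g(f(a))$ via $\Phi_e(A,C)$. These two expressions are equal, which gives commutativity on simple tensors.

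There is essentially no obstacle here; the only subtle point, worth stating carefully, is keeping track of which map gives $C$ its $A$-algebra structure (the composite $gf$) so that the Radu--Andr\'e map $\Phi_e(A,C)$ is unambiguously defined on $C^{(e)}\otimes_{A^{(e)}}A$. Once that is fixed, the verification is immediate from the formula $\Phi_e(A,-)(x^{(e)}\otimes a)=x^{p^e}a$ and the multiplicativity of~$g$.
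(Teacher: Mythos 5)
Your proof is correct and is exactly the routine verification the paper has in mind (its proof is simply ``This is straightforward.''): check equality on pure tensors $b^{(e)}\otimes a$, where both composites give $g(b)^{p^e}\,g(f(a))$. Nothing further is needed.
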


\begin{proof}
This is straightforward.
\end{proof}

\begin{lemma}
\label{finite.thm}
Let $A\xrightarrow f B\xrightarrow g C$ be a sequence of 
$\Bbb F_p$-algebra maps, and assume that $C$ is $F$-finite over $A$.
If $g$ is finite and injective, and $B^{(e)}\otimes_{A^{(e)}}A$ is 
Noetherian for some $e>0$, then $B$ is $F$-finite over $A$.
\end{lemma}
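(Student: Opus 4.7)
The plan is to exploit the commutative square from the preceding lemma and use the injectivity of $g$ together with the Noetherian hypothesis to push finiteness from $C$ back down to $B$. Fix an $e>0$ for which $B^{(e)}\otimes_{A^{(e)}}A$ is Noetherian; by Lemma~\ref{basic.thm}, {\bf 1} it suffices to prove that $\Phi_e(A,B)$ is finite for this particular $e$.

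First I would observe that since $g$ is finite, its Frobenius twist $g^{(e)}\colon B^{(e)}\to C^{(e)}$ is also finite (it is literally the same additive map, with twisted scalar actions, and finiteness depends only on the underlying modules after suitable identifications). Base-changing along $A^{(e)}\to A$ then shows that the vertical map $g^{(e)}\otimes 1\colon B^{(e)}\otimes_{A^{(e)}}A\to C^{(e)}\otimes_{A^{(e)}}A$ in the diagram of the previous lemma is finite. Since $C$ is $F$-finite over $A$, $\Phi_e(A,C)$ is finite, and composing shows that $C$ is a finite $B^{(e)}\otimes_{A^{(e)}}A$-module under the module structure induced by $g\circ\Phi_e(A,B)$ (equivalently, by $\Phi_e(A,C)\circ(g^{(e)}\otimes 1)$, by commutativity of the diagram).

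Now the injectivity of $g$ enters: it lets us regard $B$ as a $B^{(e)}\otimes_{A^{(e)}}A$-submodule of $C$, because $g$ is a homomorphism of $B^{(e)}\otimes_{A^{(e)}}A$-modules by that same commutative square. Since $B^{(e)}\otimes_{A^{(e)}}A$ is Noetherian and $C$ is finitely generated over it, the submodule $B$ is also finitely generated over $B^{(e)}\otimes_{A^{(e)}}A$. This is precisely the statement that $\Phi_e(A,B)$ is finite, so by Lemma~\ref{basic.thm}, {\bf 1} we conclude that $\Phi_1(A,B)$ is finite and $B$ is $F$-finite over $A$.

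The only step that requires real care is the second one: verifying that $g^{(e)}\otimes 1$ really is finite. This is routine but hinges on correctly tracking the Frobenius-twisted module structures (in particular, that a generating set for $C$ as a $B$-module, once reinterpreted in the $(e)$-twist, still generates $C^{(e)}$ over $B^{(e)}$). Everything else is a direct application of the preceding lemma plus the standard fact that submodules of finitely generated modules over a Noetherian ring are finitely generated.
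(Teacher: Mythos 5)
Your proof is correct and is essentially identical to the paper's: both base-change the finite map $g^{(e)}$ to see that $C$ is finite over $B^{(e)}\otimes_{A^{(e)}}A$, then use injectivity of $g$ to view $B$ as a submodule of $C$ and conclude by Noetherianness, invoking Lemma~\ref{basic.thm}, {\bf 1} to pass between $\Phi_e$ and $\Phi_1$.
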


\begin{proof}
By assumption, $C^{(e)}\otimes_{A^{(e)}}A$ is finite over
$B^{(e)}\otimes_{A^{(e)}}A$, and $C$ is finite over $C^{(e)}\otimes
_{A^{(e)}}A$.
So $C$ is finite over $B^{(e)}\otimes_{A^{(e)}}A$.
As $B$ is a $B^{(e)}\otimes_{A^{(e)}}A$-submodule of $C$
and $B^{(e)}\otimes_{A^{(e)}}A$ is Noetherian, 
$B$ is finite over $B^{(e)}\otimes_{A^{(e)}}A$.
\end{proof}

\begin{lemma}\label{matsumura.thm}
Let $A\rightarrow B$ be a ring homomorphism, and $I$ a
finitely generated nilpotent ideal of $B$.
If $B/I$ is $A$-finite, then $B$ is $A$-finite.
\end{lemma}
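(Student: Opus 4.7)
The plan is to exploit the nilpotent filtration $B = I^0 \supseteq I \supseteq I^2 \supseteq \cdots \supseteq I^n = 0$, where $n$ is chosen so that $I^n = 0$, and to show each successive quotient $I^k/I^{k+1}$ is $A$-finite. Then finiteness of $B$ over $A$ will follow by induction from the short exact sequences $0\to I^{k+1}\to I^k\to I^k/I^{k+1}\to 0$, since in the category of $A$-modules, an extension of two finitely generated modules is finitely generated.

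First, I would use the hypothesis that $I$ is finitely generated. If $I = (x_1,\ldots,x_m)$, then $I^k$ is generated as an ideal of $B$ (equivalently, as a $B$-module) by the finite set of monomials $\{x_{i_1}\cdots x_{i_k} \mid 1\le i_1,\ldots,i_k\le m\}$. Consequently, the images of these monomials generate $I^k/I^{k+1}$ as a $B/I$-module, so each $I^k/I^{k+1}$ is a finitely generated $B/I$-module.

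Next, since $B/I$ is a finite $A$-algebra by hypothesis, any finitely generated $B/I$-module is also finitely generated as an $A$-module. Applying this to the quotients above gives that $I^k/I^{k+1}$ is $A$-finite for every $k=0,1,\ldots,n-1$.

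Finally, I would proceed by downward induction on $k$: the module $I^n = 0$ is trivially $A$-finite, and from $0\to I^{k+1}\to I^k\to I^k/I^{k+1}\to 0$, if both outer terms are $A$-finite, then so is the middle term. After $n$ steps we conclude that $I^0 = B$ is $A$-finite, completing the proof. There is no real obstacle here; the only point that requires care is noting that finite generation of the ideal $I$ is what makes each graded piece $I^k/I^{k+1}$ finitely generated over $B/I$, which would fail without that hypothesis.
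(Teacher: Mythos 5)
Your proposal is correct and follows essentially the same route as the paper: filter by powers of $I$, note each $I^k/I^{k+1}$ is finite over $B/I$ (hence over $A$) because $I$ is finitely generated, and conclude by building up through extensions until the filtration terminates. The only cosmetic difference is that you induct on the submodules $I^k$ while the paper phrases it via the quotients $B/I^r$.
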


\begin{proof}
As $I^i/I^{i+1}$ is $B/I$-finite for each $i$, it is also $A$-finite.
So $B/I^r$ is $A$-finite for each $r$.
Taking $r$ large, $B$ is $A$-finite.
\end{proof}

\begin{lemma}\label{nilpt.thm}
Let $f:A\rightarrow B$ be an $\Bbb F_p$-algebra map,
and $I$ a finitely generated nilpotent ideal of $B$.
If $B/I$ is $F$-finite over $A$, then $B$ is $F$-finite over $A$.
\end{lemma}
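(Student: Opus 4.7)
The plan is to reduce the claim to Lemma~\ref{matsumura.thm} by applying it to the ring map $\Phi_1(A,B)\colon R\to B$, where $R:=B^{(1)}\otimes_{A^{(1)}}A$. What we must show is precisely that $B$ is module-finite over $R$.

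First, I would invoke the commutative square from the preceding lemma (with $e=1$) associated to the composition $A\to B\to B/I$:
\[
\xymatrix{
R=B^{(1)}\otimes_{A^{(1)}}A \ar[r]^-{\Phi_1(A,B)} \ar[d] & B \ar[d] \\
(B/I)^{(1)}\otimes_{A^{(1)}}A \ar[r]^-{\Phi_1(A,B/I)} & B/I,
}
\]
in which both vertical maps are surjective (the left one being reduction mod $I^{(1)}$ tensored with $A$). By hypothesis $\Phi_1(A,B/I)$ is finite, so $B/I$ is finite over $(B/I)^{(1)}\otimes_{A^{(1)}}A$; composing with the left vertical surjection, $B/I$ is also finite as an $R$-module.

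Now $I$ is a finitely generated nilpotent ideal of $B$, and $B/I$ is finite over $R$. Applying Lemma~\ref{matsumura.thm} to the ring homomorphism $\Phi_1(A,B)\colon R\to B$ (taking the role of $A\to B$ there), we conclude that $B$ is finite over $R$. This is exactly the statement that $\Phi_1(A,B)$ is finite, so $B$ is $F$-finite over $A$.

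I do not expect a genuine obstacle here: the content of the argument has been packaged into Lemma~\ref{matsumura.thm}, and the only new input is the standard functoriality diagram for the Radu--André map, which immediately transports finiteness of $\Phi_1(A,B/I)$ to finiteness of $B/I$ as an $R$-module. The one point to verify carefully is simply that the left vertical arrow of the square is a quotient map with kernel generated by the (finitely many) images of generators of $I^{(1)}$, so that no subtlety enters when passing from $(B/I)^{(1)}\otimes_{A^{(1)}}A$ back to $R$.
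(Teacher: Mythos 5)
Your argument is correct and is essentially the paper's own proof: the paper likewise notes that $B/I$ finite over $(B^{(1)}/I^{(1)})\otimes_{A^{(1)}}A$ implies it is finite over $B^{(1)}\otimes_{A^{(1)}}A$ (via the surjection you describe), and then applies Lemma~\ref{matsumura.thm} to the map $\Phi_1(A,B)$. No gap; the functoriality square you invoke is the same implicit compatibility the paper uses.
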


\begin{proof}
As $B/I$ is $F$-finite over $A$, $B/I$ is
$(B^{(1)}/I^{(1)})\otimes_{A^{(1)}}A$-finite.
So $B/I$ is also $B^{(1)}\otimes_{A^{(1)}}A$-finite.
By Lemma~\ref{matsumura.thm}, $B$ is 
$B^{(1)}\otimes_{A^{(1)}}A$-finite.
\end{proof}

For the absolute $F$-finiteness, we have a better result.

\begin{lemma}\label{complete.thm}
Let $B$ be an $\Bbb F_p$-algebra, and $I$ a finitely generated ideal of $B$.
If $B$ is $I$-adically complete and $B/I$ is $F$-finite, then $B$ is 
$F$-finite.
\end{lemma}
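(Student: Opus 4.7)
The plan is to bootstrap $F$-finiteness from $B/I$ to $B$ using $I$-adic completeness together with the characteristic-$p$ identity $(x+y)^p=x^p+y^p$. Write $I=(a_1,\ldots,a_n)$. A pigeonhole argument on monomials in the $a_i$ gives $I^{n(p-1)+1}\subseteq I^{[p]}=(a_1^p,\ldots,a_n^p)$, so $I/I^{[p]}$ is a finitely generated nilpotent ideal of $B/I^{[p]}$. Since $(B/I^{[p]})/(I/I^{[p]})\cong B/I$ is $F$-finite, Lemma~\ref{nilpt.thm} applied with $A=\Bbb F_p$ upgrades $B/I^{[p]}$ to being $F$-finite as well. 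Choose therefore $b_1,\ldots,b_r\in B$ whose residues generate $B/I^{[p]}$ over the image of its Frobenius, so that for every $b\in B$ there exist $c_i\in B$ with $b-\sum_i b_i c_i^p\in I^{[p]}$.

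I claim that the same $b_1,\ldots,b_r$ already generate $B$ over $B^{(1)}$ via Frobenius, which yields $F$-finiteness of $B$. Given $b\in B$, I build inductively approximations $\tilde c_{i,k}\in B$ satisfying $\tilde c_{i,k}-\tilde c_{i,k-1}\in I^k$ and $\rho_k:=b-\sum_i b_i\tilde c_{i,k}^p\in(I^{[p]})^{k+1}$. For the step from $k$ to $k+1$, expand $\rho_k$ as a finite sum $\sum_\alpha a^{p\alpha}d_\alpha$ with $|\alpha|=k+1$, apply the base approximation to each $d_\alpha$ to write $d_\alpha=\sum_i b_i c_{i,\alpha}^p+\sigma_\alpha$ with $\sigma_\alpha\in I^{[p]}$, and then fold the corrections into a single $p$-th power via Freshman's dream:
\[
\sum_\alpha a^{p\alpha}\sum_i b_i c_{i,\alpha}^p=\sum_i b_i\Bigl(\sum_\alpha a^\alpha c_{i,\alpha}\Bigr)^p.
\]
Setting $\tilde c_{i,k+1}=\tilde c_{i,k}+\sum_\alpha a^\alpha c_{i,\alpha}$, the correction lies in $I^{k+1}$ and the updated remainder $\rho_{k+1}=\sum_\alpha a^{p\alpha}\sigma_\alpha$ lies in $(I^{[p]})^{k+2}$.

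Since $I^{[p]}\subseteq I^p\subseteq I$, the remainders $\rho_k$ lie in $I^{p(k+1)}$ and tend to zero in the $I$-adic topology, and the sequences $\tilde c_{i,k}$ are Cauchy; by $I$-adic completeness they converge to some $c_i\in B$. Continuity of the $p$-th power in the $I$-adic topology then yields $b=\sum_i b_i c_i^p$, so $B=\sum_i b_i B^p$, and $B$ is $F$-finite. The main obstacle is arranging for the successive remainders to live in $(I^{[p]})^{k+1}$ rather than merely $I^{k+1}$: only then do the coefficients $a^{p\alpha}=(a^\alpha)^p$ present themselves as honest $p$-th powers, so that Freshman's dream can absorb each new correction into the previous $\sum_i b_i\tilde c_{i,k}^p$. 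This is precisely why the initial upgrade from $B/I$ to $B/I^{[p]}$ via Lemma~\ref{nilpt.thm} is essential.
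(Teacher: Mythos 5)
Your proof is correct, and its skeleton is the same as the paper's: both arguments first upgrade the hypothesis from $B/I$ to $B/I^{[p]}$ using the fact that $I/I^{[p]}$ is a finitely generated nilpotent ideal (your pigeonhole bound $I^{n(p-1)+1}\subseteq I^{[p]}$ is exactly the nilpotency that the paper uses implicitly when it invokes Lemma~\ref{matsumura.thm} to get $B/I^{(1)}B$ finite over $B^{(1)}$; your use of Lemma~\ref{nilpt.thm} with $A=\Bbb F_p$ gives the same intermediate statement), and both then use $I$-adic completeness to lift finiteness over the $p$-th powers from $B/I^{[p]}$ to $B$. The only divergence is in that second step: the paper simply quotes the complete version of Nakayama's lemma, \cite[Theorem~8.4]{Matsumura}, applied to the $B^{(1)}$-module $B$ with the ideal $I^{(1)}$, after noting $\bigcap_i (I^{(1)})^i B=0$, whereas you re-derive that statement by hand through a successive-approximation scheme, using additivity of Frobenius to fold each correction $\sum_\alpha a^\alpha c_{i,\alpha}$ into a single $p$-th power. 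Your bookkeeping (remainders in $(I^{[p]})^{k+1}$, corrections in $I^{k+1}$, then closedness of $I^{k+1}$ and separatedness $\bigcap_k I^k=0$ at the limit) is precisely what the module-theoretic formulation over $B^{(1)}$ packages automatically, since the Frobenius additivity you invoke is just $B^{(1)}$-linearity there; so your version buys self-containedness at the cost of length, and there is no gap.
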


\begin{proof}
$B/I$ is $B^{(1)}/I^{(1)}$-finite.
So $B/I^{(1)}B$ is $B^{(1)}$-finite by Lemma~\ref{matsumura.thm}.
As $\bigcap_i I^i=0$, we have $\bigcap_i (I^{(1)})^iB=0$.
Moreover, $B^{(1)}$ is $I^{(1)}$-adically complete.
Hence $B$ is $B^{(1)}$-finite by \cite[Theorem~8.4]{Matsumura}.
\end{proof}

\begin{example}\label{complete2.ex}
Let $A$ be an $\Bbb F_p$-algebra.
\begin{enumerate}
\item[\bf 1] If $A$ is $F$-finite, then the formal power series ring
$A[[x]]$ is so.
\item[\bf 2] Let $J$ be an ideal of $A$.
If $A$ is Noetherian and $A/J$ is $F$-finite, then
the $J$-adic completion $A^*$ of $A$ is $F$-finite.
\item[\bf 3] If $(A,\frak m)$ is complete local and $A/\frak m$ is
$F$-finite, then $A$ is $F$-finite.
\end{enumerate}
\end{example}

\begin{proof}
For each of {\bf 1--3}, we use Lemma~\ref{complete.thm}.
{\bf 1} Set $B=A[[x]]$ and $I=Bx$.
Then $B/I\cong A$ is $F$-finite.
{\bf 2} Set $B=A^*$ and $I=JB$.
Then $B/I\cong A/J$ is $F$-finite.
{\bf 3} is immediate.
\end{proof}

Let $A$ be a Noetherian ring and $I$ its ideal.
If $A$ is $I$-adically complete and $A/I$ is Nagata,
then $A$ is Nagata \cite{Marot}.
If $A$ is semi-local, $I$-adically complete, and $A/I$ is
quasi-excellent, then $A$ is quasi-excellent \cite{Rotthaus}.
See also \cite{Nishimura}.

\begin{lemma}\label{trivial.thm}
Let $A$ be an $\Bbb F_p$-algebra, and $B$ and $C$ be $A$-algebras.
If $B$ and $C$ are $F$-finite over $A$, then 
\begin{enumerate}
\item[\bf 1] $B\otimes_A C$ is $F$-finite over $A$.
\item[\bf 2] $B\times C$ is $F$-finite over $A$.
\end{enumerate}
\end{lemma}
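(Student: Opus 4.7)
For part \textbf{1}, the plan is to factor the structure map $A\to B\otimes_A C$ as $A\xrightarrow{g} C\xrightarrow{h} B\otimes_A C$, where $h(c)=1\otimes c$. The first map $g$ is $F$-finite by hypothesis. The second map $h$ is the base change of the $F$-finite map $A\to B$ along $A\to C$, so Lemma~\ref{basic.thm}, \textbf{5} gives that $h$ is $F$-finite. Then Lemma~\ref{basic.thm}, \textbf{2} (stability of $F$-finiteness under composition) finishes the argument.

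For part \textbf{2}, I will verify directly that $\Phi_1(A,B\times C)$ is finite by decomposing it into a product. The relevant compatibilities are that the Frobenius twist distributes over finite direct products of $\Bbb F_p$-algebras, so that $(B\times C)^{(1)}=B^{(1)}\times C^{(1)}$ as $A^{(1)}$-algebras, and that the tensor product over $A^{(1)}$ distributes over finite products of $A^{(1)}$-modules. These yield a canonical identification
\[
(B\times C)^{(1)}\otimes_{A^{(1)}}A \;\cong\; (B^{(1)}\otimes_{A^{(1)}}A)\times(C^{(1)}\otimes_{A^{(1)}}A),
\]
under which the Radu--Andr\'e map $\Phi_1(A,B\times C)$ becomes the product map $\Phi_1(A,B)\times\Phi_1(A,C)$. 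Each factor is finite by hypothesis, hence so is the product, and $B\times C$ is $F$-finite over $A$.

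Both arguments are formal consequences of Lemma~\ref{basic.thm} and the compatibility of the Frobenius twist with elementary categorical constructions, so I do not anticipate a genuine obstacle; the only point requiring a moment of care is the identification of $\Phi_1(A,B\times C)$ with the product of the individual Radu--Andr\'e maps in part \textbf{2}, which is a routine check on elements.
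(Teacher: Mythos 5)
Your proof is correct and follows essentially the same route as the paper: part \textbf{1} is exactly the paper's argument (base change plus composition via Lemma~\ref{basic.thm}, \textbf{5} and \textbf{2}), only with the roles of $B$ and $C$ swapped, and part \textbf{2} is the same finiteness observation, with the paper simply noting directly that $B$ and $C$ are each finite over $(B\times C)^{(1)}\otimes_{A^{(1)}}A$ rather than spelling out the product decomposition of the Radu--Andr\'e map. No gaps.
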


\begin{proof}
{\bf 1}
$B$ is $F$-finite over $A$, and $B\otimes_A C$ is $F$-finite over $B$
by Lemma~\ref{basic.thm}, {\bf 5}.
By Lemma~\ref{basic.thm}, {\bf 2}, $B\otimes_A C$ is $F$-finite over $A$.

{\bf 2} Both $B$ and $C$ are finite over 
$(B\times C)^{(1)}\otimes_{A^{(1)}}A$, and so is $B\times C$.
\end{proof}

\begin{lemma}
Let $A\rightarrow B$ be an $\Bbb F_p$-algebra map, and
assume that $B$ and $B^{(e)}\otimes_{A^{(e)}}A$ are Noetherian for some $e>0$.
Then $B$ is $F$-finite over $A$ if and only if $B/P$ is $F$-finite
over $A$ for every minimal prime $P$ of $B$.
\end{lemma}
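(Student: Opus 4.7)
The plan is to dispense with the easy direction first and then handle the converse via a two-step reduction.

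For the implication $(\Rightarrow)$, if $B$ is $F$-finite over $A$, then for any minimal prime $P$ of $B$ the surjection $B\twoheadrightarrow B/P$ is $F$-finite by Example~\ref{example.ex}, \textbf{3}; composing with the given $F$-finite map $A\to B$ via Lemma~\ref{basic.thm}, \textbf{2}, yields the $F$-finiteness of $A\to B/P$. This is immediate and requires no further effort.

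For the converse $(\Leftarrow)$, my strategy is a two-step reduction. First, I would exploit the Noetherianity of $B$ to observe that the nilradical $N\subset B$ is finitely generated and nilpotent. Since $(B/N)^{(e)}\otimes_{A^{(e)}}A$ is a quotient of the Noetherian (by hypothesis) ring $B^{(e)}\otimes_{A^{(e)}}A$, Lemma~\ref{nilpt.thm} reduces the problem to showing that $B/N$ is $F$-finite over $A$. So I may assume $B$ is reduced.

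Second, I would enumerate the (finitely many) minimal primes $P_1,\dots,P_n$ of $B$ and consider $C:=\prod_{i=1}^n B/P_i$. By hypothesis each factor is $F$-finite over $A$, so Lemma~\ref{trivial.thm}, \textbf{2} (iterated on $n$) gives that $C$ is $F$-finite over $A$. In the reduced ring $B$ we have $\bigcap_i P_i=0$, so the diagonal map $g:B\to C$ is injective. It is also finite, because $C$ is generated as a $B$-module (via $g$) by the $n$ orthogonal idempotents $e_1,\dots,e_n$ of the product decomposition: indeed $Be_i\cong B/P_i$ is cyclic, so $C=\sum_i Be_i$ is a finite $B$-module. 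Lemma~\ref{finite.thm} applied to $A\to B\xrightarrow{g}C$, together with the Noetherian hypothesis on $B^{(e)}\otimes_{A^{(e)}}A$, then yields that $B$ is $F$-finite over $A$, completing the proof.

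The only mildly nonroutine point is the finiteness of $g$, which rests on the trivial observation that each $B/P_i$ is a cyclic $B$-module; once this is in hand, the argument is a direct assembly of the previously established lemmas, with the main decisions being to first quotient by the nilradical and then to take $C=\prod B/P_i$ as the intermediate $F$-finite extension.
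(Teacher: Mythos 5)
Your proof is correct and is essentially the paper's own argument: both pass to the product $\prod_P B/P$ (F-finite over $A$ by Lemma~\ref{trivial.thm}), apply Lemma~\ref{finite.thm} to the finite injective map from the reduced ring, and use Lemma~\ref{nilpt.thm} to deal with the nilradical, the only difference being that you quotient by the nilradical at the start while the paper does so at the end. The finiteness of $B\to\prod_i B/P_i$ via the idempotent generators is the same routine observation the paper implicitly uses, so there is nothing to add.
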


\begin{proof}
The \lq only if' part is obvious by Example~\ref{example.ex}, {\bf 3}.
We prove the converse.
Let $\Min B$ be the set of minimal primes of $B$.
Then $\prod_{P\in\Min B} B/P$ is $F$-finite over $A$ by
Lemma~\ref{trivial.thm}.
As $B\red\rightarrow\prod_{P\in\Min B}B/P$ is finite injective, and
$B\red^{(e)}\otimes_{A^{(e)}}A$ is Noetherian, 
$B\red$ is $F$-finite over $A$ by Lemma~\ref{finite.thm}.
As $B$ is Noetherian, $B$ is $F$-finite over $A$ by 
Lemma~\ref{nilpt.thm}.
\end{proof}

\begin{remark}
Fogarty asserted that an $\Bbb F_p$-algebra
map $A\rightarrow B$ with $B$ Noetherian 
is $F$-finite if and only if
the module of K\"ahler differentials $\Omega_{B/A}$ is
a finite $B$-module \cite[Proposition~1]{Fogarty}.
The \lq only if' part is true and easy.
The proof of \lq if' part therein has a gap.
Although $R_1$ in step~(iii) is assumed to be Noetherian,
it is not proved that $R'$ in step~(iv) is Noetherian.
The author does not know if this direction is true
or not.
\end{remark}

\section{Descent of $F$-finiteness}

In this section, we prove a sort of descent theorem on $F$-finiteness
of homomorphisms.

\begin{lemma}
Let $R$ be a commutative ring, $\varphi:M\rightarrow N$ and
$h:F\rightarrow G$ be $R$-linear maps.
If $\varphi$ is $R$-pure and 
$1_N\otimes h:N\otimes F\rightarrow N\otimes G$ is surjective,
then $1_M\otimes h:M\otimes F\rightarrow M\otimes G$ is surjective.
\end{lemma}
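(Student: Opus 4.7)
The plan is to reduce the surjectivity statement to the vanishing of a certain tensor product, and then apply purity directly.

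First I would form the cokernel $C := \Coker(h)$, so that the sequence
\[
F \xrightarrow{h} G \to C \to 0
\]
is exact. Since tensoring with any module is right exact, tensoring with $M$ and with $N$ respectively gives exact sequences
\[
M\otimes F \xrightarrow{1_M\otimes h} M\otimes G \to M\otimes C \to 0,
\qquad
N\otimes F \xrightarrow{1_N\otimes h} N\otimes G \to N\otimes C \to 0.
\]
Thus the surjectivity of $1_M\otimes h$ is equivalent to $M\otimes C=0$, and the hypothesis that $1_N\otimes h$ is surjective is equivalent to $N\otimes C=0$.

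Now I would invoke purity of $\varphi$. By definition, $R$-purity of $\varphi:M\to N$ means that $\varphi\otimes 1_X:M\otimes X\to N\otimes X$ is injective for every $R$-module $X$. Applying this with $X=C$ yields an injection $\varphi\otimes 1_C:M\otimes C\hookrightarrow N\otimes C$. Since the target vanishes by the hypothesis on $1_N\otimes h$, we conclude $M\otimes C=0$, which is what we wanted.

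There is essentially no obstacle here: the lemma is a formal consequence of the definition of purity combined with right exactness of the tensor product. The only thing to keep straight is the direction of the implication — purity gives injectivity of $\varphi\otimes 1_C$, which lets us transport the vanishing $N\otimes C=0$ back to $M\otimes C=0$, rather than the other way around.
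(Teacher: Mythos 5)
Your proof is correct and follows exactly the same route as the paper: pass to $C=\Coker h$, note that surjectivity of $1_N\otimes h$ (resp.\ $1_M\otimes h$) is equivalent to $N\otimes C=0$ (resp.\ $M\otimes C=0$) by right exactness, and use purity of $\varphi$ to get the injection $M\otimes C\hookrightarrow N\otimes C=0$. You have merely spelled out the right-exactness step that the paper leaves implicit.
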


\begin{proof}
Let $C:=\Coker h$.
Then by assumption, $N\otimes C=0$.
By the injectivity of $\varphi\otimes 1_C:M\otimes C\rightarrow N\otimes C$,
we have that $M\otimes C=0$.
\end{proof}

\begin{corollary}\label{pure.thm}
Let $A\rightarrow B$ be a pure ring homomorphism, and $h:F\rightarrow G$ an 
$A$-linear map.
If $1_B\otimes h:B\otimes_AF\rightarrow B\otimes_AG$ is surjective,
then $h$ is surjective.
\qed
\end{corollary}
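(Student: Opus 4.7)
The plan is to deduce the corollary as a direct instance of the preceding lemma. I would set $R=A$, take $\varphi$ to be the pure ring map $A\rightarrow B$ (viewing $A$ and $B$ as $A$-modules, so $M=A$ and $N=B$), and keep $h:F\rightarrow G$ as given. Purity of the ring homomorphism $A\rightarrow B$ is exactly purity of $\varphi$ as a map of $A$-modules, so the hypothesis of the lemma is satisfied.

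Next, by assumption $1_B\otimes h:B\otimes_A F\rightarrow B\otimes_A G$ is surjective, so the lemma yields that $1_A\otimes h:A\otimes_A F\rightarrow A\otimes_A G$ is surjective. Under the canonical identifications $A\otimes_A F\cong F$ and $A\otimes_A G\cong G$, the map $1_A\otimes h$ corresponds to $h$ itself, so $h$ is surjective.

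There is essentially no obstacle here: the corollary is just the special case $M=A$, $N=B$ of the lemma, unwound through the unit isomorphism of the tensor product. The only thing to note is the (standard) point that purity of a ring homomorphism $A\rightarrow B$ means purity as a map of $A$-modules, which is what allows the lemma to apply.
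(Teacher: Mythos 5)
Your proof is correct and is exactly what the paper intends: the corollary is the special case $M=A$, $N=B$, $\varphi$ the pure map $A\rightarrow B$ of the preceding lemma, read through the identification $A\otimes_A F\cong F$, which is why the paper marks it with \qed and gives no separate argument.
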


\begin{lemma}\label{pure2.thm}
Let $A\rightarrow B$ be a pure ring homomorphism, and $G$ an $A$-module.
If $B\otimes_A G$ is a finitely generated $B$-module, then
$G$ is finitely generated as an $A$-module.
\end{lemma}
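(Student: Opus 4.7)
The plan is to apply Corollary~\ref{pure.thm} to a suitably chosen surjection onto $G$ from a finite free $A$-module.

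First, I would choose a finite generating set for $B\otimes_A G$ as a $B$-module. Suppose it consists of elements $\xi_1,\dots,\xi_m\in B\otimes_A G$. Each $\xi_j$ is a finite sum $\sum_k b_{j,k}\otimes g_{j,k}$ with $b_{j,k}\in B$ and $g_{j,k}\in G$. Collecting the (finitely many) elements $g_{j,k}$ that appear, I obtain $g_1,\dots,g_n\in G$ such that each $\xi_j$ lies in the $B$-submodule of $B\otimes_A G$ generated by $1\otimes g_1,\dots,1\otimes g_n$. Hence $\{1\otimes g_1,\dots,1\otimes g_n\}$ already generates $B\otimes_A G$ as a $B$-module.

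Next, I would set $F:=A^n$ and define the $A$-linear map $h\colon F\rightarrow G$ by sending the $i$-th standard basis vector to $g_i$. Tensoring with $B$ over $A$ identifies $B\otimes_A F$ with $B^n$, and $1_B\otimes h$ sends the $i$-th standard basis vector of $B^n$ to $1\otimes g_i$. Since the $1\otimes g_i$ generate $B\otimes_A G$, the map $1_B\otimes h\colon B\otimes_A F\rightarrow B\otimes_A G$ is surjective.

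Finally, since $A\rightarrow B$ is pure, Corollary~\ref{pure.thm} applies directly to conclude that $h\colon F\rightarrow G$ is itself surjective. Therefore $G$ is generated over $A$ by $g_1,\dots,g_n$, which is what we wanted. There is no real obstacle here once one notices that generators of $B\otimes_A G$ may, without loss of generality, be taken of the form $1\otimes g_i$; the rest is an immediate application of the previously established corollary.
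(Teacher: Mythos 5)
Your proof is correct and follows essentially the same route as the paper: write the generators of $B\otimes_A G$ in terms of finitely many elements $g_i\in G$, map a finite free $A$-module onto these, observe that the base-changed map is surjective (its image is a $B$-submodule containing the $1\otimes g_i$), and conclude by Corollary~\ref{pure.thm}. No issues.
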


\begin{proof}
Let $\theta_1,\ldots,\theta_r$ be generators of $B\otimes_A G$.
Then we can write $\theta_j=\sum_{i=1}^{s} b_{ij}\otimes g_{ij}$ for some
$s>0$, $b_{ij}\in B$, and $g_{ij}\in G$.
Let $F$ be the $A$-free module with the basis $\{f_{ij}\mid 1\leq i\leq s,\;
1\leq j\leq r\}$, and $h:F\rightarrow G$ be the $A$-linear map
given by $f_{ij}\mapsto g_{ij}$.
Then by construction, $1_B\otimes h$ is surjective.
By Corollary~\ref{pure.thm}, $h$ is surjective, and hence
$G$ is finitely generated.
\end{proof}

\begin{definition}[cf.~{\cite[(2.7)]{Hashimoto2}}]
An $\Bbb F_p$-algebra map $A\rightarrow B$ is said to be $e$-Dumitrescu 
if there exists some $e>0$ such that $\Phi_e(A,B)$ is 
$A$-pure (i.e., pure as an $A$-linear map).
\end{definition}

\begin{lemma}
Let $e,e'>0$.
If $A\rightarrow B$ is both $e$-Dumitrescu and $e'$-Dumitrescu, then 
it is $(e+e')$-Dumitrescu.
In particular, an $e$-Dumitrescu map is $er$-Dumitrescu map for $r>0$.
\end{lemma}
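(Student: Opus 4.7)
The plan is to exhibit a factorization
\[
\Phi_{e+e'}(A,B) \;=\; \Phi_e(A,B) \circ \beta,
\]
where $\beta: B^{(e+e')} \otimes_{A^{(e+e')}} A \to B^{(e)} \otimes_{A^{(e)}} A$ is the natural $A$-linear map sending $b^{(e+e')} \otimes a$ to $(b^{p^{e'}})^{(e)} \otimes a$, and then to show both factors are $A$-pure. The verification of the identity is a direct calculation: $\Phi_e \circ \beta(b^{(e+e')} \otimes a) = (b^{p^{e'}})^{p^e} a = b^{p^{e+e'}} a$, and the well-definedness of $\beta$ on the balanced tensor product reduces to the obvious compatibility of Frobenius with multiplication. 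These formal identities are the content of \cite[Lemma~4.1]{Hashimoto}.

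Next I would recognize $\beta$ as obtained from $\Phi_{e'}(A,B)$ by two purity-preserving operations: first the $(e)$-Frobenius twist to get an $A^{(e)}$-linear map $\Phi_{e'}(A,B)^{(e)}: B^{(e+e')} \otimes_{A^{(e+e')}} A^{(e)} \to B^{(e)}$, and then base change along $F^e: A^{(e)} \to A$, yielding $\beta = \Phi_{e'}(A,B)^{(e)} \otimes_{A^{(e)}} 1_A$. Purity survives each step: the Frobenius twist is an equivalence between $A$-modules and $A^{(e)}$-modules that commutes with tensor products, so $A^{(e)}$-purity of $\Phi_{e'}(A,B)^{(e)}$ is equivalent to $A$-purity of $\Phi_{e'}(A,B)$; and if $f$ is $R$-pure then $f \otimes_R 1_S$ is $S$-pure for any ring map $R \to S$, because for an $S$-module $L$ one has $L \otimes_S (M \otimes_R S) \cong L \otimes_R M$ as abelian groups and $L \otimes_R f$ is injective by $R$-purity applied to the restriction of scalars. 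Therefore $\beta$ is $A$-pure.

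Since $\Phi_e(A,B)$ is $A$-pure by the $e$-Dumitrescu hypothesis and a composition of $A$-pure maps is $A$-pure, $\Phi_{e+e'}(A,B)$ is $A$-pure; that is, $A \to B$ is $(e+e')$-Dumitrescu. The ``in particular'' assertion then follows by induction on $r \geq 1$: the case $r=1$ is trivial, and assuming $A \to B$ is $er$-Dumitrescu, the first part applied with $(e, er)$ in place of $(e, e')$ yields that it is $e(r+1)$-Dumitrescu.

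The main obstacle is purely organizational: keeping careful track of which map is linear over which of the rings $A$, $A^{(e)}$, $A^{(e')}$, $A^{(e+e')}$ while identifying $\beta$ as a twist-then-base-change of $\Phi_{e'}(A,B)$. Once this bookkeeping is cleanly in place, purity propagates through the composition with no further work.
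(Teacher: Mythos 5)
Your proof is correct and follows essentially the same route as the paper: the paper's one-line proof cites the factorization $\Phi_{e+e'}(A,B)=\Phi_e(A,B)\circ\bigl(\Phi_{e'}(A,B)^{(e)}\otimes_{A^{(e)}}1_A\bigr)$ from \cite[Lemma~4.1, {\bf 2}]{Hashimoto}, which is exactly the decomposition you construct, and the purity bookkeeping (twist, base change, composition) is the same implicit argument spelled out. Nothing is missing; your induction for the ``in particular'' statement is also fine.
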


\begin{proof}
This follows from \cite[Lemma~4.1, {\bf 2}]{Hashimoto}.
\end{proof}

So a $1$-Dumitrescu map is Dumitrescu (that is, $e$-Dumitrescu for
all $e>0$), see \cite[Lemma~2.9]{Hashimoto2}.

\begin{lemma}\label{revision.thm}
Let $e>0$.
\begin{enumerate}
\item[\bf 1] {\rm\cite[Lemma~2.8]{Hashimoto2}},
\item[\bf 2] {\rm\cite[Lemma~2.12]{Hashimoto2}}, and
\item[\bf 3] {\rm\cite[Corollary~2.13]{Hashimoto2}}
\end{enumerate}
hold true when we replace all the
`Dumitrescu' therein by `$e$-Dumitrescu'.
\end{lemma}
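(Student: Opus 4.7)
The plan is to revisit each of the three results cited from \cite{Hashimoto2} and to verify that the argument given there survives when `Dumitrescu' is weakened to `$e$-Dumitrescu' for a fixed $e>0$.

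The conceptual reason this should work is the lemma just preceding: an $e$-Dumitrescu map $A\rightarrow B$ is automatically $er$-Dumitrescu for every $r>0$. Equivalently, $\Phi_{er}(A,B)$ is $A$-pure for all $r\geq 1$, so purity is at our disposal along an unbounded arithmetic progression of Frobenius exponents. In the proofs in \cite{Hashimoto2}, the Dumitrescu hypothesis enters only through purity of some $\Phi_{e'}(A,B)$ for a suitably chosen $e'$; as long as the choice of $e'$ is free, we may restrict it to multiples of $e$ without altering the substance of the argument. This is the single principle on which the whole revision rests.

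Concretely, I would process the three items in order. For \textbf{1} (\cite[Lemma~2.8]{Hashimoto2}) I would transcribe the original proof and, at each point where an exponent $e'$ is introduced, impose the additional constraint $e\mid e'$; the purity hypothesis needed is then precisely the one supplied by our weakened assumption, combined with the preceding lemma. The same bookkeeping handles \textbf{2} (\cite[Lemma~2.12]{Hashimoto2}). For \textbf{3} (\cite[Corollary~2.13]{Hashimoto2}), the deduction from \textbf{2} in the original article is purely formal, so it transports verbatim once \textbf{2} has been secured.

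The main obstacle, and the place requiring real care, is checking that none of the three original arguments silently pins down a specific $e'$ that is incompatible with the divisibility constraint $e\mid e'$. In characteristic-$p$ descent arguments of this flavour, Frobenius exponents are almost always raised by further Frobenius twists rather than fixed by an external datum, so I expect this check to present no genuine obstruction; but the verification must be performed stepwise through each of the three proofs. Should an isolated step genuinely require a prescribed exponent $e'$, the remedy is to enlarge $e'$ to the smallest multiple of $e$ exceeding it and invoke \cite[Lemma~4.1, \textbf{2}]{Hashimoto} to propagate the purity upward; this keeps the argument inside the regime where our hypothesis applies.
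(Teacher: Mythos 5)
Your proposal matches the paper's treatment: the paper itself gives no detailed argument, declaring the verification straightforward and leaving it to the reader, and the mechanism you identify — that the Dumitrescu hypothesis enters the proofs of \cite[Lemma~2.8, Lemma~2.12, Corollary~2.13]{Hashimoto2} only through purity of some relative Frobenius $\Phi_{e'}$, which the preceding lemma ($e$-Dumitrescu implies $er$-Dumitrescu) supplies at every multiple of $e$ — is exactly the routine adaptation intended. So your approach is correct and essentially the same as the paper's.
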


The proof is straightforward, and is left to the reader.

\begin{theorem}\label{main.thm}
Let $f:A\rightarrow B$ and $g:B\rightarrow C$ be $\Bbb F_p$-algebra maps,
and $e>0$.
Assume that 
$g$ is $e$-Dumitrescu, and the image of the associated map
${}^ag:\Spec C\rightarrow \Spec B$ contains the set of maximal ideals
$\Max B$ of $B$.
If $gf$ is $F$-finite, and
$B$ and $C^{(e)}\otimes_{A^{(e)}}A$ are Noetherian,
then $f$ is $F$-finite.
\end{theorem}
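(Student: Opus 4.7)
The plan is to apply Lemma~\ref{pure2.thm}. Set $R := B^{(e)}\otimes_{A^{(e)}}A$ and $S := C^{(e)}\otimes_{A^{(e)}}A$; by Lemma~\ref{basic.thm}, \textbf{1}, $F$-finiteness of $f$ is equivalent to $B$ being finite over $R$ via $\Phi_e(A,B)$. My strategy is to build a pure ring homomorphism $R \to S$ and verify that $S \otimes_R B$ is a finite $S$-module; Lemma~\ref{pure2.thm} will then deliver the conclusion.

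First I would establish that $g:B\to C$ is $B$-pure. This is what Lemma~\ref{revision.thm} is meant to supply: the revised form of \cite[Lemma~2.8]{Hashimoto2}, applied to the $e$-Dumitrescu hypothesis on $g$ together with ${}^ag(\Spec C)\supseteq \Max B$, forces $g$ to be $B$-pure. Since for $\Bbb F_p$-algebras the Frobenius twist $(-)^{(e)}$ is merely a relabeling, the map $B^{(e)}\to C^{(e)}$ coincides with $g$ as a ring homomorphism, hence is $B^{(e)}$-pure. Using the natural identification $S \cong C^{(e)}\otimes_{B^{(e)}}R$, I would then exhibit $R\to S$ as the base change of $B^{(e)}\to C^{(e)}$ along $B^{(e)}\to R$, and invoke preservation of purity under base change to conclude that $R\to S$ is $R$-pure.

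Next I would verify that $S\otimes_R B$ is finite over $S$. A short tensor computation gives
\[
S\otimes_R B \;\cong\; (C^{(e)}\otimes_{B^{(e)}}R)\otimes_R B \;\cong\; C^{(e)}\otimes_{B^{(e)}}B,
\]
the $B^{(e)}$-action on $B$ being via $F^e$ (coming from $\Phi_e(A,B)$). The Radu--Andr\'e map $\Phi_e(B,C): C^{(e)}\otimes_{B^{(e)}}B\to C$ is routinely checked to be $S$-linear, and is $B$-pure by the $e$-Dumitrescu hypothesis on $g$, hence injective. Thus $S\otimes_R B$ embeds as an $S$-submodule of $C$. Since $gf$ is $F$-finite, $C$ is $S$-finite; and as $S$ is assumed Noetherian, any $S$-submodule of $C$ is itself $S$-finite, so $S\otimes_R B$ is $S$-finite.

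Applying Lemma~\ref{pure2.thm} to the pure ring homomorphism $R\to S$ and the $R$-module $G := B$ then yields that $B$ is $R$-finite, i.e.\ that $f$ is $F$-finite. The hard part of this plan is the first step: extracting $B$-purity of $g$ itself out of the $e$-Dumitrescu condition together with ${}^ag(\Spec C)\supseteq \Max B$. This is precisely what Lemma~\ref{revision.thm} is designed to package; once $g$ is known to be $B$-pure, the remainder is a formal tensor-product rearrangement followed by the Noetherian-submodule argument.
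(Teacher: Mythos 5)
Your proposal is correct and follows essentially the same route as the paper: identify $S\otimes_R B\cong C^{(e)}\otimes_{B^{(e)}}B$, embed it into $C$ via the ($B$-pure, hence injective) map $\Phi_e(B,C)$, use Noetherianness of $S$ and finiteness of $C$ over $S$ (from $F$-finiteness of $gf$) to get $S$-finiteness, deduce purity of $R\to S$ by base change from purity of $g^{(e)}$, and finish with Lemma~\ref{pure2.thm}. The only cosmetic difference is the citation: the paper extracts the purity of $g^{(e)}$ from Lemma~\ref{revision.thm}, \textbf{2} (the $e$-Dumitrescu version of \cite[Lemma~2.12]{Hashimoto2}) rather than the version of \cite[Lemma~2.8]{Hashimoto2} you name, but the content you need is exactly what that item supplies.
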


\begin{proof}
Note that $\Phi_e(A,C):C^{(e)}\otimes_{A^{(e)}}A\rightarrow C$ is a finite map.
Note also that 
$C^{(e)}\otimes_{B^{(e)}}B$ is a $C^{(e)}\otimes_{A^{(e)}}A$-submodule 
of $C$ through $\Phi_e(B,C)$, since $\Phi_e(B,C)$ is $B$-pure and hence is
injective.
As $C^{(e)}\otimes_{A^{(e)}}A$ is Noetherian, 
$C^{(e)}\otimes_{B^{(e)}}B$, which is a submodule of the finite module $C$,
is a finite $C^{(e)}\otimes_{A^{(e)}}A$-module.
Since $g^{(e)}:B^{(e)}\rightarrow C^{(e)}$ is pure by
Lemma~\ref{revision.thm}, {\bf 2}, 
$B^{(e)}\otimes_{A^{(e)}}A\rightarrow C^{(e)}\otimes_{A^{(e)}}A$ is also pure.
Since
\[
C^{(e)}\otimes_{B^{(e)}}B\cong
(C^{(e)}\otimes_{A^{(e)}}A)\otimes_{B^{(e)}\otimes_{A^{(e)}}A}B
\]
is a finite $C^{(e)}\otimes_{A^{(e)}}A$-module, 
$B$ is a finite $B^{(e)}\otimes_{A^{(e)}}A$-module 
by Lemma~\ref{pure2.thm}.
\end{proof}

A homomorphism $f:A\rightarrow B$ between Noetherian rings is said to be
reduced if $f$ is flat with geometrically reduced fibers.

\begin{corollary}\label{main-cor.thm}
Let $g:B\rightarrow C$ be a faithfully flat reduced 
homomorphism between Noetherian
$\Bbb F_p$-algebras.
If $C$ is $F$-finite, then $B$ is $F$-finite.
\end{corollary}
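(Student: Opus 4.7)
The plan is to apply Theorem~\ref{main.thm} with the base ring taken to be the prime field: set $A=\Bbb F_p$ and let $f:\Bbb F_p\to B$ be the unique structural homomorphism. Then $gf:\Bbb F_p\to C$ is $F$-finite if and only if $C$ is $F$-finite, by Lemma~\ref{basic.thm}, {\bf 4}, so the hypothesis \lq\lq$gf$ is $F$-finite'' of Theorem~\ref{main.thm} is supplied by the assumption that $C$ is $F$-finite. Similarly, once we obtain that $f$ is $F$-finite, another application of Lemma~\ref{basic.thm}, {\bf 4} yields that $B$ itself is $F$-finite, which is the desired conclusion.

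It remains to check the remaining hypotheses of Theorem~\ref{main.thm}. First I would observe that, because $A^{(e)}=\Bbb F_p^{(e)}=\Bbb F_p$, the ring $C^{(e)}\otimes_{A^{(e)}}A$ reduces to $C^{(e)}$, which as an abstract ring coincides with $C$ and is therefore Noetherian; $B$ is Noetherian by hypothesis. Second, since $g$ is faithfully flat, the associated map ${}^ag:\Spec C\to\Spec B$ is surjective, so its image certainly contains $\Max B$.

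The only substantive point is that a reduced homomorphism is $e$-Dumitrescu. This is where I would invoke the Radu--Andr\'e--Dumitrescu criterion: for a flat homomorphism $g:B\to C$ of Noetherian $\Bbb F_p$-algebras, the geometric reducedness of the fibers is equivalent to the $B$-purity (in fact, the faithful flatness) of the relative Frobenius $\Phi_e(B,C)$ for every $e>0$. Since $g$ is flat with geometrically reduced fibers, $\Phi_e(B,C)$ is $B$-pure for all $e>0$, so $g$ is $e$-Dumitrescu (in fact Dumitrescu). I expect this verification to be the only delicate step, and it is entirely a citation from the theory developed in \cite{Dumitrescu} and recalled in \cite{Hashimoto2}; no new argument is required. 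With all hypotheses in place, Theorem~\ref{main.thm} gives that $f:\Bbb F_p\to B$ is $F$-finite, hence $B$ is $F$-finite.
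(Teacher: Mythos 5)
Your proposal is correct and follows essentially the same route as the paper: take $A=\Bbb F_p$, note that $g$ is Dumitrescu by Dumitrescu's theorem on reduced flat homomorphisms, that ${}^ag$ is surjective by faithful flatness, and apply Theorem~\ref{main.thm} together with Lemma~\ref{basic.thm}, {\bf 4}. One small caveat: your parenthetical claim that geometric reducedness of the fibers is equivalent to \emph{faithful flatness} of the relative Frobenius overstates Dumitrescu's result (faithful flatness of $\Phi_e(B,C)$ characterizes \emph{regular}, not merely reduced, homomorphisms by Radu--Andr\'e); the correct statement, purity of $\Phi_e(B,C)$, is exactly what you use, so the argument is unaffected.
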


\begin{proof}
By \cite[Theorem~3]{Dumitrescu}, $g$ is Dumitrescu.
As $g$ is faithfully flat, ${}^ag:\Spec C\rightarrow\Spec B$ is surjective.
Letting $A=\Bbb F_p$ and $f:A\rightarrow B$ be the unique map,
the assumptions of Theorem~\ref{main.thm} are satisfied, and
hence $f$ is $F$-finite.
That is, $B$ is $F$-finite.
\end{proof}

\begin{corollary}[Seydi \cite{Seydi}]
Let $(B,\fm)$ be a Nagata local ring with the $F$-finite residue field
$k=B/\fm$.
Then $B$ is $F$-finite.
In particular, $B$ is excellent, and is a homomorphic image of a regular
local ring.
\end{corollary}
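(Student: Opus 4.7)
The plan is to apply Corollary~\ref{main-cor.thm} with $C := \hat B$, the $\fm$-adic completion of $B$. What needs to be verified is that $g\colon B\to\hat B$ is a faithfully flat reduced homomorphism between Noetherian rings, and that $\hat B$ is $F$-finite.

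First I would check the hypotheses on $g$. Faithful flatness of the completion map is a standard property of Noetherian local rings. For the reduced condition, I would invoke the classical characterization of the Nagata property for Noetherian local rings: $(B,\fm)$ is Nagata if and only if its formal fibers (i.e.\ the fibers of $B\to\hat B$) are geometrically reduced (see, e.g., Matsumura, \emph{Commutative Ring Theory}). Together with flatness of completion, this gives precisely that $g$ is reduced in the sense of the paper.

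Next, $\hat B$ is complete local with residue field $\hat B/\fm\hat B\cong k$, which is $F$-finite by hypothesis. Example~\ref{complete2.ex}, part~\textbf{3}, then yields that $\hat B$ is $F$-finite. At this point Corollary~\ref{main-cor.thm} applies to $g$, and we conclude that $B$ is $F$-finite.

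For the remaining assertions: excellence of $B$ follows from Kunz's theorem \cite{Kunz}, which ensures that any Noetherian $F$-finite ring is excellent. The statement that $B$ is a homomorphic image of a regular local ring is a known consequence of $F$-finiteness for Noetherian local rings (a theorem of Gabber), which I would cite rather than prove. The only nontrivial point in the argument is the invocation of the Nagata/formal-fiber equivalence; once this is in hand, every remaining step is a direct application of results already established in Section~2 and Theorem~\ref{main.thm}.
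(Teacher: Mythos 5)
Your proposal is correct and follows essentially the same route as the paper: complete, note the residue field is still $k$, apply Example~\ref{complete2.ex}, {\bf 3} to get $F$-finiteness of $\hat B$, deduce that $B\rightarrow\hat B$ is (faithfully flat and) reduced from the Nagata hypothesis, and conclude by Corollary~\ref{main-cor.thm}, with Kunz and Gabber for the final assertions. The only difference is bibliographic: the paper cites EGA~IV (7.6.4), (7.7.2) for the fact that a Nagata local ring has geometrically reduced formal fibers, where you cite the standard characterization as in Matsumura; only that one implication is actually needed.
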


\begin{proof}
Let $g:B\rightarrow C=\hat B$ be the completion of $B$.
Then $C$ is a complete local ring with the 
residue field $k$.
By Example~\ref{complete2.ex}, {\bf 3},
$C$ is $F$-finite.
As $g$ is reduced by \cite[(7.6.4), (7.7.2)]{EGA-IV}, 
$B$ is $F$-finite by Corollary~\ref{main-cor.thm}.

The last assertions follow from \cite[Theorem~2.5]{Kunz} and
\cite[Remark~13.6]{Gabber}.
\end{proof}

Even if $A\rightarrow B$ is a faithfully flat reduced homomorphism and
$B$ is excellent, $A$ need not be quasi-excellent.
There is a Nagata local ring $A$ which is not quasi-excellent 
\cite{Rotthaus2}, \cite{Nishimura2}, and its
completion $A\rightarrow \hat A=B$ is an example.


\begin{thebibliography}{GHKN}
\def\ji#1#2(#3)#4-#5.{\newblock{\em#1} {\bf#2} (#3), #4--#5.}
\def\GTM#1{Graduate Texts in Math. {\bf #1}, Springer}
\def\SLN#1{Lecture Notes in Math. {\bf #1}, Springer}

\bibitem[And]{Andre}
M. Andr\'e,
Homomorphismes r\'eguliers en caract\'eristique $p$,
C. R. Acad. Sci. Paris S\'er. I Math. {\bf 316} (1993), 643--646.

\bibitem[And2]{Andre2}
M. Andr\'e,
Autre d\'emonstration de th\'eor\`eme liant r\'egularit\'e et platitude
en caract\'eristique $p$,
{\em Manuscripta Math.} {\bf 82} (1994), 363--379.

\bibitem[Dum]{Dumitrescu2} T. Dumitrescu,
On a theorem of N. Radu and M. Andr\'e,
{\em Stud. Cerc. Mat.} {\bf 46} (1994), 445--447.

\bibitem[Dum2]{Dumitrescu} T. Dumitrescu,
Reducedness, formal smoothness and approximation in characteristic $p$,
{\em Comm. Algebra} {\bf 23} (1995), 1787--1795.

\bibitem[Ene]{Enescu}
F. Enescu,
On the behavior of $F$-rational rings under flat base change,
{\em J. Algebra} {\bf 233} (2000), 543--566.

\bibitem[Fog]{Fogarty}
J. Fogarty,
K\"ahler differentials and Hilbert's fourteenth problem for finite
groups,
{\em Amer. J. Math.} {\bf 102} (1980), 1159--1175.

\bibitem[Gab]{Gabber}
O. Gabber,
Notes on some $t$-structures,
{\em Geometric Aspects of Dwork Theory.
Vol. II,} Walter de Gruyter GmbH \& Co. KG (2004), 711--734.

\bibitem[Gro]{EGA-IV}
A. Grothendieck,
{\em \'El\'ements de G\'eom\'etrie Alg\'ebrique, IV \(seconde partie\),}
{\em Publ. IHES} {\bf 24} (1965).

\bibitem[Has]{Hashimoto}
M. Hashimoto,
Cohen--Macaulay $F$-injective homomorphisms,
{\em Geometric and Combinatorial Aspects of Commutative algebra}
(Messina, 1999), 
J. Herzog and G. Restuccia (eds.),
Dekker (2001), 231--244.

\bibitem[Has2]{Hashimoto2}
M. Hashimoto,
$F$-pure homomorphisms, strong $F$-regularity, and $F$-injectivity,
{\em Comm. Algebra} {\bf 38} (2010), 4569--4596.

\bibitem[Jan]{Jantzen} J. C. Jantzen, 
{\em Representations of Algebraic Groups,} 2nd ed.,
AMS (2003).

\bibitem[Kun]{Kunz}
E. Kunz,
On Noetherian rings of characteristic $p$,
{\em Amer. J. Math.} {\bf 98} (1976), 999--1013.

\bibitem[Mar]{Marot}
J. Marot,
Sur les anneaux universellment japonais,
{\em Bull. Soc. Math. France} {\bf 103} (1975), 103--111.

\bibitem[Mat]{Matsumura}
H. Matsumura,
{\em Commutative Ring Theory,}
First paperback edition, Cambridge (1989).

\bibitem[Nis]{Nishimura}
J.-i. Nishimura,
On ideal-adic completion of noetherian rings,
{\em J. Math. Kyoto Univ.} {\bf 21} (1981), 153--169.

\bibitem[Nis2]{Nishimura2}
J.-i. Nishimura,
A few examples of local rings, I,
{\em Kyoto J. Math.} {\bf 52} (2012), 51--87.

\bibitem[Rad]{Radu}
N. Radu,
Une classe d'anneaux noeth\'eriens,
{\em Rev. Roumanie Math. Pures Appl.} {\bf 37} (1992), 79--82.

\bibitem[Rot]{Rotthaus2}
C. Rotthaus,
Nicht ausgezeichnete, universell japanische Ringe,
{\em Math. Z.} {\bf 152} (1977), 107--125.

\bibitem[Rot2]{Rotthaus}
C. Rotthaus,
Komplettierung semilokaler quasiausgezeichneter Ringe,
{\em Nagoya Math. J.} {\bf 76} (1979), 173--180.

\bibitem[Sey]{Seydi}
H. Seydi,
Sur la th\'eorie des anneaux excellents en caract\'eristique $p$,
{\em Bull. Sci. Math.} {\bf 96} (1972), 193--198.

\end{thebibliography}
\end{document}